\theoremstyle{plain}
\newtheorem{theorem}{Theorem}[section]
\newtheorem{lemma}[theorem]{Lemma}
\theoremstyle{remark}
\newtheorem{question}[theorem]{Question}
\newcommand{\R}{\ensuremath{\mathbb{R}}}
\newcommand{\D}{\ensuremath{\mathcal{D}}}
\newcommand{\N}{\ensuremath{\mathbb{N}}}
\newcommand{\Ll}{\ensuremath{\mathcal{L}}}
\newcommand{\restr}[1]{\lower3pt\hbox{$|_{#1}$}}
\DeclareMathOperator{\graph}{graph}
\DeclareMathOperator{\side}{side}
\begin{document}

\title{A function whose graph has positive doubling measure}

\author{Tuomo Ojala}
\author{Tapio Rajala}

\address{Department of Mathematics and Statistics \\
         P.O. Box 35 (MaD) \\
         FI-40014 University of Jyv\"askyl\"a \\
         Finland}
\email{tuomo.j.ojala@jyu.fi}
\email{tapio.m.rajala@jyu.fi}

\subjclass[2000]{Primary 28A12. Secondary 30L10.}
\keywords{Doubling measure, thin set, fat set}
\date{\today}


\begin{abstract}
  We show that a doubling measure on the plane can give positive measure to the graph of a continuous function. This answers
  a question by Wang, Wen and Wen \cite{WangWenWen2013}. Moreover we show that the doubling constant
  of the measure can be chosen to be arbitrarily close to the doubling constant of the Lebesgue measure.
\end{abstract}


\maketitle
\section{Introduction}
A Borel regular measure $\mu$ on $\R^d$ is called doubling if there exists 
a constant $C < \infty$ such that
\[
0 < \mu(Q_1) \leq C \mu(Q_2) < \infty,
\]
for any pair of adjacent cubes $Q_1$ and $Q_2$ with the same sidelength. A constant $C$ for which this holds
is called a doubling constant of $\mu$.
In this note we are in dimension two and so the cubes are called squares.
They are understood to be product sets $[x,x+s] \times [y,y+s]$ without rotation, for sidelength $s>0$.
Squares are called adjacent if they intersect.
A set $E \subset \R^d$ is called \emph{thin} if $\mu(E)=0$ for all doubling measures of $\R^d$.
In this paper we answer in negative to a question posed by Wang, Wen and Wen in \cite[Problem 1]{WangWenWen2013}: 
\begin{quote}
Is the $\graph(f):=\left\{ (x,f(x)): x \in [0,1] \right\}$ of a continuous function $f: [0,1] \to [0,1]$ thin? 
\end{quote}
The answer to the aforementioned question is contained in the following
\begin{theorem} \label{thm:continuouscase}
  For any $\epsilon>0$ there exist a doubling measure $\mu$ on $[0,1]^2$ with doubling constant less than $1 + \epsilon$
  and a continuous function $f \colon [0,1] \to [0,1]$ such that 
 \begin{equation}\label{eq:largemeasurecontinuous}
  \mu(\graph(f)) > (1-\epsilon)\mu([0,1]^2).  
 \end{equation}
\end{theorem}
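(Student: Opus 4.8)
The plan is to build the function $f$ and the measure $\mu$ simultaneously by a recursive subdivision of $[0,1]^2$. Fix a large integer $M$ and at the $n$-th stage partition each square of the previous stage into $M\times M$ congruent subsquares, obtaining the generation-$n$ grid with sidelength $s_n=M^{-n}$. I will define $f$ as the uniform limit of piecewise linear functions $f_n$ and $\mu$ as the weak limit of measures $\mu_n$ that are constant multiples of Lebesgue measure on each generation-$n$ square, the mass of each square being redistributed among its $M^2$ children so that the total mass of every grid square is preserved once assigned. Since Lebesgue measure already has doubling constant $1$ for this notion of adjacent equal squares, the whole point is to perturb it only gently: I will arrange that the densities of any two side-adjacent generation-$n$ squares differ by a factor at most $1+\delta$, where $\delta=\delta(\epsilon)$ is chosen small at the end.

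The mechanism that forces mass onto the graph is a transverse gradient: the density of a generation-$n$ square will depend, up to a controlled factor, only on its vertical distance to the graph, decaying by $(1+\delta)^{-1}$ per row of height $s_n$ as one moves away from the band of rows actually met by the graph. Two side-adjacent generation-$n$ squares have vertical distances to the graph differing by $O(s_n)$, i.e.\ by $O(1)$ rows, provided $f_{n+1}$ is chosen to differ from $f_n$ by at most $s_n$ so that the occupied band varies slowly from column to column; their densities then differ by at most $(1+\delta)^{O(1)}$. As Lebesgue measure contributes ratio $1$ and all our perturbations are multiplicative steps of size $1+\delta$, this yields a grid-doubling constant $(1+\delta)^{O(1)}$, and the genuine doubling bound for arbitrary squares follows by comparing each square with the boundedly many grid squares of comparable side that it meets. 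Taking $\delta$ small then makes this smaller than $1+\epsilon$.

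The delicate point is to make this gentle gradient nevertheless retain almost all of the mass, and here the design of $f$ enters. A graph meets only about one row per column at each scale, so if $f$ were tame the occupied band would have height comparable to $s_n$ and the gentle shoulders of width $\sim s_n/\delta$ would carry most of the mass, defeating concentration. I therefore take $f$ strongly oscillatory: within every generation-$n$ column its oscillation sweeps a vertical interval of height $w_n$ with $s_n\ll w_n\to 0$, so the graph fills a tall band and the retained fraction inside the band, roughly $1/\bigl(1+s_n/(\delta w_n)\bigr)$, can be pushed above $1-\epsilon_n$ for any prescribed $\epsilon_n$. Concretely I will specify $f_{n+1}$ by a fixed oscillatory replacement pattern on each square, so that the construction is self-similar and the quantities $w_n$, the per-stage retention, and the density ratios are all computable. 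Choosing $\{\epsilon_n\}$ summable with $\sum_n\epsilon_n<\epsilon$ and writing $N_n$ for the union of generation-$n$ squares meeting $\graph(f)$, the preserved-mass property gives
\[
\mu(\graph(f))=\lim_{n\to\infty}\mu(N_n)\ \ge\ \prod_{n}(1-\epsilon_n)\,\mu([0,1]^2)\ >\ (1-\epsilon)\mu([0,1]^2),
\]
where the first equality uses $\bigcap_n N_n=\graph(f)$, valid because $f$ is continuous.

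The \emph{main obstacle} is the simultaneous fulfilment of the two competing requirements: near-total retention of the mass on a Lebesgue-null graph forces the transverse density to blow up, while a doubling constant near $1$ forbids any sharp jump between neighbouring squares. The construction reconciles them by spreading a large cumulative concentration over infinitely many scales in small $(1+\delta)$ steps and by making the graph fat at every finite scale through the oscillation heights $w_n$; verifying that the parameters $M$, $\delta$ and $\{w_n,\epsilon_n\}$ can be chosen consistently, and that the resulting bound $(1+\delta)^{O(1)}\le 1+\epsilon$ holds uniformly over all squares and all scales, is the heart of the argument.
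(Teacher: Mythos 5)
You have a genuine gap, and it is located exactly at the point you yourself flag as the heart of the argument. You fix the subdivision parameter $M$ once and stipulate a \emph{fixed} oscillatory replacement pattern ``so that the construction is self-similar''. But a scale-invariant pattern has a \emph{constant} per-stage retention factor $\theta$, and $\theta=1$ is impossible: a doubling measure is positive on every square, while the complement of a continuous graph contains open squares, so some generation must lose a positive fraction of mass; by self-similarity every generation then loses that same fixed fraction, giving $\mu(\graph(f))=\lim_n\mu(N_n)=\lim_n \theta^n\,\mu([0,1]^2)=0$. So ``choosing $\{\epsilon_n\}$ summable'' is not available inside your own scheme — in a self-similar construction $\epsilon_n$ is constant. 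Summable losses force \emph{scale-dependent} parameters, which is precisely why the paper works with the growing sequences $(m_k)$ and $(n_k)$: its per-stage loss is the binomial tail $\tfrac12(pq)^{(n_k-1)/2}$ plus a left-over term of order $4^{-m_k}$, made summable by letting $n_k,m_k$ grow. Quantitatively, the obstruction appears in your own band/shoulder formula: retention $1-\epsilon_n$ with $\epsilon_n\to 0$ forces $w_n/s_n\gtrsim 1/(\delta\epsilon_n)\to\infty$, yet mass preservation on every generation-$n$ square means concentration can only occur \emph{inside} each square, so the next sweep must satisfy $w_{n+1}\le s_n$, i.e. $w_{n+1}/s_{n+1}\le M$, a bounded quantity — contradiction. (If instead $w_{n+1}>s_n$, then within a parent square the graph crosses essentially a single child column, and the horizontal $(1+\delta)$-per-column constraint caps the retention in that square at roughly $\delta$, far below $1$.)

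A second, related defect: your ansatz that the density is a function of vertical distance to the graph band is incompatible with the per-square mass preservation you impose. The position of the stage-$(n+1)$ band inside the stage-$n$ band varies from column to column, so child densities read off from the new global profile do not average back to the parent square's assigned mass; some column-dependent correction is needed, and it must itself respect the $(1+\delta)$ horizontal bound — you never specify it, and this is not a routine detail. The paper sidesteps both problems by reversing the logic: the measure is built first, via multiplicative $4$-adic weights $p$ and $q=2-p$ (so the martingale/consistency property is automatic, the $y$-projection stays Lebesgue, and adjacent squares' weight products differ in at most one index, giving ratio at most $q/p$ and doubling constant near $1$ via the Peng--Wen lemma), and the graph is then defined \emph{a posteriori} by following, in each column, whichever of the two construction rectangles carries more than average mass; Lusin's theorem upgrades the resulting measurable function to a continuous one. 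Your transverse-gradient-toward-a-prescribed-graph scheme with fixed $M$ cannot be repaired without converting it into a scale-dependent construction of essentially this kind.
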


By a density point argument one easily sees that upper-porous sets are thin. Thus the graph of a function $f \colon \R^d \to \R$ 
satisfying
\begin{equation}\label{eq:lip}
  \text{lip}[f](x) = \liminf_{r \to 0} \sup_{y \in B(x,r)}\frac{|f(x)-f(y)|}{r} < \infty
\end{equation}
at every point $x \in \R^d$ is thin.
In particular, the graph of any Lipschitz function is thin. Interestingly, being a graph is to some extent necessary
for this property: Garnett, Killip and Schul showed that there exist rectifiable curves in $\R^d$ that are not thin \cite{GarnettEtal2010}.
Here a rectifiable curve is defined as the image of a continuous curve with finite length.

It would be interesting to further investigate which doubling measures give zero measure to which graphs.
For instance one could study the connection between the doubling constant and the modulus of continuity in this problem.
To our knowledge not much is known on such questions beyond the obviously thin graphs of functions with finite $\text{lip}$
as defined in \eqref{eq:lip},
and the example provided in this note. For instance the following natural question is still open:

\begin{question}
  Is the graph of a H\"older continuous function $f:[0,1] \to [0,1]$ thin?
\end{question}

\section{Proof of the Theorem}

Let us now prove Theorem \ref{thm:continuouscase}.
By Lusin's theorem, we can reduce the proof to the case of measurable functions. 
Indeed, 
let $\mu$ be a doubling measure and $f \colon [0,1] \to [0,1]$ a measurable function
such that 
\[
 \mu(\graph(f)) =  (1 - \epsilon_1)\mu([0,1]^2) > (1 - \epsilon)\mu([0,1]^2).
\] 
Since the projection $\pi^x_{\sharp}\mu$ of the measure $\mu$ to the real line is a Borel regular 
measure, by Lusin's theorem 
for any $\epsilon_0>0$ we have a continuous function $\hat{f} \colon [0,1] \to [0,1]$ such that 
\[
 \pi^x_{\sharp}\mu (\left\{ x: f(x) \neq \hat{f}(x) \right\})< \epsilon_0 \mu([0,1]^2). 
\]
This implies that
\begin{align*}
\mu(\graph(\hat{f})) & \geq \mu(\graph(f)) - \pi^x_{\sharp}\mu (\left\{ x: f(x) \neq \hat{f}(x) \right\})\\
& >  (1 - \epsilon_1)\mu([0,1]^2) - \epsilon_0 \mu([0,1]^2) >  (1 - \epsilon)\mu([0,1]^2),  
\end{align*}
when $\epsilon_1 + \epsilon_0 \leq \epsilon$.
Thus, to prove Theorem \ref{thm:continuouscase} it is enough to construct 
a doubling measure $\mu$ on $[0,1]^2$ with 
 doubling constant less than $1+\epsilon$
 and a measurable function $f \colon [0,1] \to [0,1]$ such that
 \begin{equation}\label{eq:largemeasure}
  \mu(\graph(f)) > (1-\epsilon)\mu([0,1]^2).  
 \end{equation}

\subsection{Constructing the measure}

  We construct the desired measure using a $4$-adic distribution of mass. 
  The weights used in the distribution will change during the iteration process. 
  
  Take $0 < p<1$ and define $q = 2-p$. These will be fixed throughout the construction and 
  the basic distribution of mass will be done according to these two numbers. The idea of the distribution of mass
  and the respective approximation of the graph after three iteration steps is shown in 
  Figure \ref{fig:const1}. This same pattern would then be repeated in every subrectangle with increased number of steps
  in the $4$-adic division. It is relatively 
  easy to see that while this increasing of steps would result in positive mass for the
  graph, it would also blow up the doubling constant. To ensure that the measure stays
  doubling we have to proceed with a little more care, the main idea being the same.
  \begin{figure}
    \psfrag{p}{$p$}
    \psfrag{q}{$q$}
    \psfrag{r}{$q^2$}
    \psfrag{s}{$pq$}
    \psfrag{t}{$p^2$}
    \centering
     \includegraphics[width=0.85\textwidth]{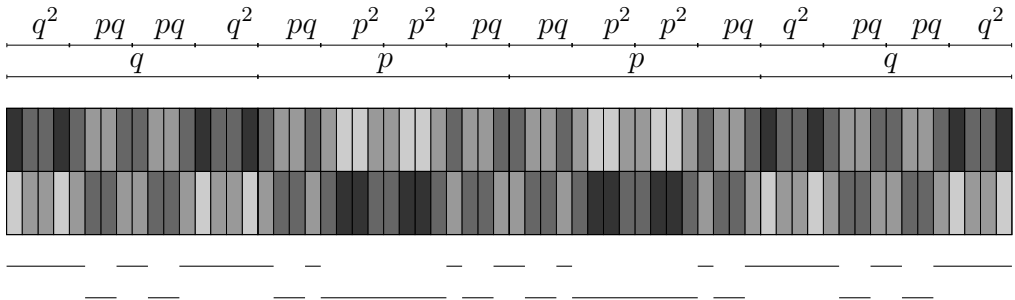}
     \caption{Illustration of the completed second step. Measure inside rectangles is spread using weights $p$ and $q$ in a $4$-adic division.
             This construction with its dual construction (having $p$ and $q$ swapped) placed above each other give the basic idea of the measure.
             In order to obtain a nicely doubling measure the measure has to be corrected near the line where the two constructions meet.
             In constructing the graph we drop from the above and below all the rectangles that have less than avarage measure;
             as well as the areas where the measure had to be redefined. An approximating graph is also illustrated.}
    \label{fig:const1}
  \end{figure}

  Measure $\mu_k$ after each iteration step will be a weighted 
  Lebesgue measure
  \[
   d\mu_k(x,y) = \prod_{i=1}^k w_i(x,y) d\Ll(x,y),
  \]
  where weights $w_i$ are constant in $4$-adic squares of level $i$.
  The weak* limit $\mu$ of the measures $\mu_k$ as $k \to \infty$ will then have the desired properties. Let us now define the weights $w_i$.

  We define the weights $w_i$ using a two step construction. For this purpose we take two sequences $(m_j)_{j = 1}^\infty,
  (n_j)_{j = 1}^\infty$ of positive odd integers larger than $2$ that shall be later specified. Let us call the unit square $[0,1]^2$ the 
  \emph{level $0$ construction rectangle}. For abrivity we denote $M_k = \sum_{j=1}^k(m_j+n_j)$.
  We will define the weights in $4$-adic scales by repeatedly first uniformly distributing in $m_j$ scales and then doing the
  actual redistribution of mass in the following $n_j$ scales.
  
  Let us now give the precise definition for the weights.
  Suppose we are given the level $k$ construction rectangles $[l,r]\times[b,t]$. The weights $w_i$ for 
  steps $M_k < i \le M_{k+1}$ and level $k+1$ construction rectangles inside the rectangles $[l,r]\times[b,t]$
  are defined as follows.
  
  \textbf{First step: uniform distribution.} We define $w_i = 1$ for all steps $M_k < i \le M_k + m_{k+1}$. The purpose for doing this 
  is to have the side length $4^{-M_k - m_{k+1}}$ of the squares compared to the height $t-b$ of the construction rectangle
  to be sufficiently small. As we will see in the second step and in the final computations the $m_k$ will determine the size of the
  set where we move from the weights of the type $p^n$ to the weights of the type $q^n$.
  
  \textbf{Second step: non-uniform distribution.}
  The basic idea of the construction was shown in Figure \ref{fig:const1}, but,
  in order to keep the measure nicely doubling, we only gradually let the
  weights change from $p^n$ to $q^n$ as indicated in Figure \ref{fig:AllConstructionSteps}.
  
  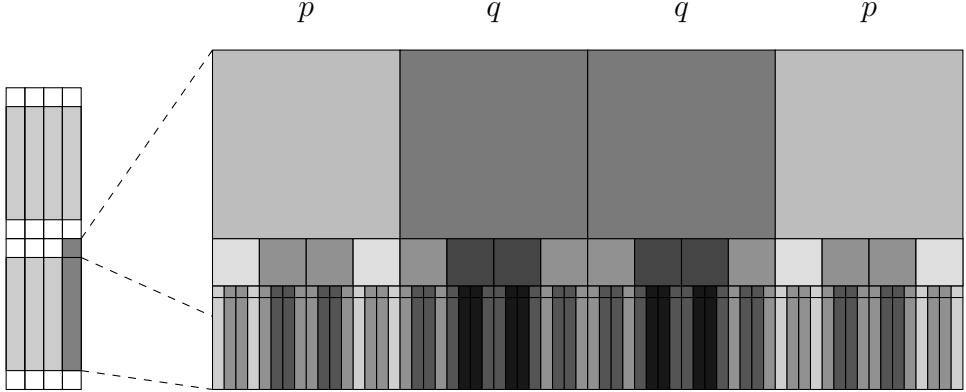
\begin{figure}
  \begin{centering}
\begin{tikzpicture}
  \pgfmathsetmacro{\bl}{-2} 
  \pgfmathsetmacro{\cl}{-10/64-10/16} 
  \pgfmathsetmacro{\tl}{10/4} 
    \foreach \i [count=\k]  [evaluate=\k as \y using (\k-1)/4]  in {1,...,4}
    {  
	    \draw [fill=black!20] (\y+5/4-9,-2+1/4) rectangle (\y+5/4+1/4-9,-1/4);
        \draw (\y+5/4-9,-2) rectangle (\y+5/4+1/4-9,0);
	    \draw [fill=black!20] (\y+5/4-9,1/4) rectangle (\y+5/4+1/4-9,2-1/4);
        \draw (\y+5/4-9,0) rectangle (\y+5/4+1/4-9,2);
     }
    
	\draw [fill=black!50] (8/4-9,-2+1/4) rectangle (8/4+1/4-9,-1/4);
	\draw [fill=black!50] (8/4-9,-1/4) rectangle (8/4+1/4-9,0);

    \draw [dashed] (8/4+1/4-9,0) -- (-5,\tl);
    \draw [dashed] (8/4+1/4-9,-1/4) -- (-5,\cl-1/4);
   
    \draw [dashed] (8/4+1/4-9,-2+1/4) -- (-5,\bl);
    \foreach \i [count=\k]  [evaluate=\k as \y using (\k-1)/4*10-5] [evaluate=\i as \c using (10*\i)]  in {1,4,4,1}
    {  
      \foreach \j [count=\l]  [evaluate=\l as \u using (\l-1)*10/16] [evaluate=\j as \cc using (\c*(\i)+\j*\i/16)-1]  in {1,4,4,1}
      {
	\foreach \n [count=\o]  [evaluate=\o as \z using (\o-1)*10/64] [evaluate=\n as \ccc using (\j+\i+\n)*8-5]  in {1,4,4,1}
	  { 
	    \draw [fill=black!\ccc] (\y+\u+\z,-10/64-10/16) rectangle (\y+10/64+\u+\z,-10/16);
	}
      }
     }

    \foreach \i [count=\k]  [evaluate=\k as \y using (\k-1)/4*10-5] [evaluate=\i as \c using (10*\i)]  in {1,4,4,1}
    {  
      \foreach \j [count=\l]  [evaluate=\l as \u using (\l-1)*10/16] [evaluate=\j as \cc using (\j+\i)*10-7]  in {1,4,4,1}
	{ 
	  \draw [fill=black!\cc] (\y+\u,-10/16) rectangle (\y+10/16+\u,0);
	  \draw [fill=black!\cc] (\y+\u,-10/16) rectangle (\y+10/16+\u,0);
	}
     }
      \foreach \j [count=\l]  [evaluate=\l as \u using (\l-1)*10/4-5] [evaluate=\j as \cc using (13*\j)]  in {2,4,4,2}
	{ \draw [fill=black!\cc] (\u,0) rectangle (10/4+\u,10/4);
      \ifnum \j < 4 
	  \node at (\u+5/4,10/4+.5) {$p$};
	  \else
	  \node at (\u+5/4,10/4+.5) {$q$};
	  \fi
	  }
    \foreach \i [count=\k]  [evaluate=\k as \y using (\k-1)/4*10-5] [evaluate=\i as \c using (10*\i)]  in {1,4,4,1}
    {  
      \foreach \j [count=\l]  [evaluate=\l as \u using (\l-1)*10/16] [evaluate=\j as \cc using (\c*(\i)+\j*\i/16)-1]  in {1,4,4,1}
      {
	\foreach \n [count=\o]  [evaluate=\o as \z using (\o-1)*10/64] [evaluate=\n as \ccc using (\j+\i+\n)*8-5]  in {1,4,4,1}
	  { 
	    \draw [fill=black!\ccc] (\y+\u+\z,\bl) rectangle (\y+10/64+\u+\z,-10/64-10/16);
	}
      }
     }
\end{tikzpicture}
\end{centering}
 \caption{The weight changes only one step at a time when looking at any vertical line, and the points where the weight
  changes are distances $\frac{1}{4^i}$ apart. On every horizontal line
the weight is divided in $4$-adic manner and the projection to the $y$-axis is the Lebesgue measure.}
\label{fig:AllConstructionSteps}
\end{figure}


In order to obtain this doubling transition at the top and bottom of the construction rectangles 
we define on the upper halves of the construction rectangles of level $k$ living in $[0,1] \times [b, t]$ the weights for
$M_k + m_{k+1} < i \le M_{k+1}$ as
\begin{equation*}
  w_i(x,y) = \left\{ \begin{array}{ll}
    q, & \text{ if } x \in A_i \text{ and } y \in B_i, \\
    p, & \text{ if } x \notin A_i \text{ and } y \in B_i,  \\
    1, & \text{ otherwise, }
\end{array}\right.
\end{equation*}
where
\[
 A_i := \bigcup_{j \in \left\{ 0,1,\dots,4^{(i-1)}-1 \right\} }\left[ \frac{j}{ 4^{(i-1)}} + \frac{1}{4^i} ,\frac{j}{ 4^{(i-1)}} + \frac{3}{4^i}\right]
\]
and
\[
 B_i := \left[\frac{t+b}{2} + \sum_{j=M_k+m_{k+1}+1}^{i-1}\frac{1}{4^{j}}, t - \sum_{j=M_k+m_{k+1}+1}^{i-1}\frac{1}{4^{j}} \right].
\]

On the lower halves of the construction rectangles of level $k$ living in $[0,1] \times [b, t]$ we swap the roles of $p$ and $q$ in defining the weights
$w_i$ for $M_k + m_{k+1} < i \le M_{k+1}$. In other words, we define
\begin{equation*}
  w_i(x,y) = \left\{ \begin{array}{ll}
    p, & \text{ if } x \in A_i \text{ and } \frac{t-b}{2} + y \in {B}_i, \\
    q, & \text{ if } x \notin A_i \text{ and } \frac{t-b}{2} + y \in {B}_i,  \\
    1, & \text{ otherwise, }
\end{array}\right.
\end{equation*}
where $A_i$ and $B_i$ are as above.

After this we define the \emph{construction rectangles of level $k+1$} inside $[0,1] \times [b,t]$ to be the
$4^{-M_{k+1}}$-wide rectangles 
\[
 \left[\frac{j}{4^{M_{k+1}}},\frac{j+1}{4^{M_{k+1}}}\right] \times \left[b+\frac1{4^{M_k+m_{k+1}}},\frac{t+b}{2}-\frac1{4^{M_k+m_{k+1}}}\right]
\]
and
\[
 \left[\frac{j}{4^{M_{k+1}}},\frac{j+1}{4^{M_{k+1}}}\right] \times \left[\frac{t+b}{2}+\frac1{4^{M_k+m_{k+1}}},t-\frac1{4^{M_k+m_{k+1}}}\right]
\]
for $j \in \left\{0,1,\dots,4^{M_{k+1}}-1 \right\}$.

We call the complement of the union of all the construction rectangles of level $k+1$ inside the level $k$ construction rectangles
the \emph{left-over part of level $k+1$}.
On the left-over part of level $k+1$ we define the weight $w_i=1$ for $i>M_{k+1}$. 

\subsection{Approximating the graph}
The next thing is to construct the approximative graph of the desired function. 
We define the approximation of the graph always after the second step is completed. 
That is, the $k$:th approximative graph is defined at the scale $4^{-M_k}$.

Let us denote the first approximative graph by $R_1$.
This will simply be the set of level $1$ construction rectangles where we have more 
weights $q$ than weights $p$ at the level $M_1 = m_1 + n_1$ (see Figure \ref{fig:apprOfGraph}).
In other words, we choose upper or lower construction rectangles depending which of these has larger mass, i.e.
\begin{align*}
  R_1 = \bigg\{ (x,y)\,:\,
    &w_i(x,y) \in \{p, q\} \,\forall i=m_1+1,\dots,m_1+n_1\\
    & \text{ and } \prod_{i = m_1+1}^{m_1+n_1}w_i(x,y) > (pq)^{n_1/2} \bigg\} .
\end{align*}
We can estimate the total measure of the construction rectangles of level $1$ that are not chosen by
\[
 2^{-n_1}\sum_{i=0}^{(n_1-1)/2}\binom{n_1}{i}q^ip^{n_1-i} \le \frac12(pq)^{\frac{n_1-1}{2}}. 
\]
Notice that $pq = 1- (1-p)^2 < 1$. The measure of the left-over part of level $1$ is at most $4^{-m_1+1}$.
Similarly, for $k >1$, let us denote by $R_k$ the union of construction rectangles inside $R_{k-1}$ that have more weights $q$ than weights $p$
in the range $M_{k-1} + m_k < i \leq M_k$. 
The left-over part of level $k$ is included in $2^{k+1}$ strips of width $1$ and of height at most $2^{-k} 4^{-m_k}$.
Therefore the above estimates for the measures of the left-over part
and the construction rectangles
that are not chosen
hold with $n_1$ and $m_1$ replaced by $n_k$ and $m_k$ respectively.
Now we define the $k$:th approximation
of the graph $S_k$ and the actual graph $S$ as
\[
S_k = \bigcap_{j=1}^k R_j \qquad \text{and} \qquad S = \bigcap_{j=1}^\infty R_j.
\]

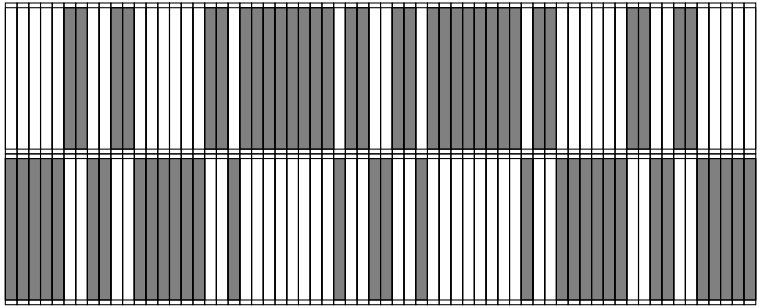
\begin{figure}
  \begin{center}
    \begin{tikzpicture}
      
    \foreach \i [count=\k]  [evaluate=\k as \y using (\k-1)/4*10-5] [evaluate=\i as \c using (10*\i)]  in {-1,1,1,-1}
    {  
      \foreach \j [count=\l]  [evaluate=\l as \u using (\l-1)*10/16] [evaluate=\j as \cc using \j+\i]  in {-1,1,1,-1}
      {
        \foreach \n [count=\o]  [evaluate=\o as \z using (\o-1)*10/64] [evaluate=\n as \ccc using (\j+\i+\n)]  in {-1,1,1,-1}
	  { 
      \pgfmathparse{\ccc>0?1:0}
      \ifnum\pgfmathresult>0
            \draw [fill=black!50] (\y+\u+\z,-2+4/64+2) rectangle (\y+10/64+\u+\z,-4/64+2);
      \else
            \draw [fill=black!50] (\y+\u+\z,-2+4/64) rectangle (\y+10/64+\u+\z,-4/64);
      \fi
        \draw (\y+\u+\z,-2) rectangle (\y+10/64+\u+\z,0);
        \draw [thin] (\y+\u+\z,4/64) rectangle (\y+10/64+\u+\z,2-4/64);
        \draw [thin] (\y+\u+\z,-4/64) rectangle (\y+10/64+\u+\z,-2+4/64);
        \draw (\y+\u+\z,0) rectangle (\y+10/64+\u+\z,2);
      }
      }
     }
    \end{tikzpicture}
  \end{center}
  \caption{Approximation of the graph is simply the better choice of the two options, 
  at each point $x$ we choose the construction rectangle (either upper or lower) that has more mass.}
  \label{fig:apprOfGraph}
\end{figure}

  It is clear that we get a graph of a function in the limit: when looking at any point $x \in [0,1]$,
  on the line $\left\{ x \right\} \times [0,1]$
  the sequence of the approximations, $S_j \cap \left\{ x \right\} \times [0,1] $, 
  is just a sequence of nested closed
  intervals whose length tends to zero.

  When we choose the sequences $(n_i)$ and $(m_i)$ appropriately we have
  \[
    \mu(S) \ge 1 - \sum_{k=1}^\infty\frac12(pq)^{\frac{n_k-1}{2}} - \sum_{k=1}^\infty\frac{1}{4^{m_k-1}} > 1-\epsilon,
  \]
  so that we have \eqref{eq:largemeasure}.

  By the construction, it is clear that the function is Borel: any pre-image of an open interval is simply a countable union of
  intervals.

\subsection{The measure is doubling}
The only thing left to check is that we actually have a doubling measure as claimed.
For this purpose we rephrase Lemma 2 from \cite{PengWen2014}. 
After this it is easy to check that the construction at hand satisfies the assumptions of the lemma.
\begin{lemma}\cite[Lemma 2]{PengWen2014}
  Let $\D_k$ be a collection of $N$-adic squares of sidelength $N^{-k}$ in $[0,1]^2$. Suppose that $\epsilon>0$ and
  $\left\{ \mu_k \right\}_{k=1}^{\infty}$ is a sequence of probability measures on $[0,1]^2$ satisfying:
  \begin{enumerate}
    \item $\forall k, \; \mu_k \restr{Q} = C_{Q} \Ll$ for all $Q \in \D_k$ where $\Ll$ is Lebesgue measure and
      $C_{Q}>0$ is a constant weight,
    \item $\mu_k(Q) = \mu_{k+1}(Q)$ for all $Q \in \D_k$,
    \item $\mu_k(Q) \leq \epsilon \mu_k(G)$ for $Q, G \in \D_k$ adjacent.
  \end{enumerate}
  Then the sequence $\left\{ \mu_k  \right\}$ converges in a weak$*$-sense 
  to a $C$-doubling measure $\mu$, where $C=C(\epsilon) \to 1$ when $\epsilon \to 1$. 
  \label{le:doubling_limit}
\end{lemma}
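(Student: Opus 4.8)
The plan is to build the limit measure $\mu$ directly from the consistency hypothesis (2) and then to verify doubling by a multi-scale comparison in which the working scale is chosen to balance two competing errors. By hypothesis (2), for each $N$-adic square $Q$ of level $j$ the value $\mu_k(Q)$ is independent of $k\ge j$, so I set $\mu(Q):=\mu_j(Q)$. Hypotheses (1)--(2) make this assignment additive on the algebra generated by the $N$-adic squares, since the mass of a square equals the sum of the masses of its $N^2$ children; as each $\mu_k$ is a probability measure, Carath\'eodory extension yields a Borel probability measure $\mu$ with $\mu(Q)=\mu_j(Q)$ for every level-$j$ square. Weak$*$ convergence $\mu_k\rightharpoonup\mu$ then follows by approximating any continuous test function uniformly by functions constant on level-$k$ squares, using that $\mu_k$ and $\mu$ agree on all squares of level at most $k$.

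The engine of the doubling estimate is hypothesis (3), which I would apply at an auxiliary fine level $m$ rather than at the scale of the squares in question. Since (3) holds at every level, two adjacent level-$m$ squares have $\mu$-masses within a factor $\epsilon$, and chaining along a path of adjacent squares shows that two level-$m$ squares lying in a common block of side $D$ cells have masses within a factor $\epsilon^{CD}$. Thus, near any fixed region, all level-$m$ cells have measure in $[\epsilon^{-CD}\bar\mu,\epsilon^{CD}\bar\mu]$ for a common reference value $\bar\mu$.

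For an arbitrary square $R$ of sidelength $s$ I would then write $\mu(R)=\sum_{Q'}\mu(R\cap Q')$ over level-$m$ squares and sandwich it between $\sum_{Q'\subseteq R}\mu(Q')$ and $\sum_{Q'\cap R\neq\emptyset}\mu(Q')$. With $a:=sN^{m}$ the number of level-$m$ cells across $R$, there are $a^2(1-O(1/a))$ contained cells and $a^2(1+O(1/a))$ meeting cells, so the boundary layer is a fraction $O(1/a)$ of the whole; together with the chaining bound this gives $\mu(R)=\bar\mu\,a^2(1+\eta)$, where $\eta$ is governed by $\epsilon^{\pm Ca}-1$ and by $O(1/a)$. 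The step I expect to be the main obstacle is that these two errors pull against each other: enlarging $a$ shrinks the boundary fraction $O(1/a)$ but inflates the chaining distortion $\epsilon^{Ca}$, and vice versa. I would resolve this by letting the level $m$, equivalently $a$, grow slowly as $\epsilon\to1$; for instance taking $a$ of order $|\log\epsilon|^{-1/2}$ makes $a|\log\epsilon|\to0$, so $\epsilon^{Ca}\to1$, while still $1/a\to0$.

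Finally, for two adjacent squares $Q_1,Q_2$ of equal sidelength $s$ I would use this same $m$: all level-$m$ cells meeting $Q_1\cup Q_2$ lie in one block of side $O(a)$ and hence share a single reference value $\bar\mu$ up to the factor $\epsilon^{Ca}$, so the common factor $a^2$ cancels and $\mu(Q_1)/\mu(Q_2)\le\frac{1+\eta}{1-\eta}\,\epsilon^{Ca}=:C(\epsilon)$ with $C(\epsilon)\to1$ as $\epsilon\to1$. The extension step and the elementary cell counting for $a^2$ and the boundary layer are routine; the delicate point is precisely the simultaneous control of the two errors through the $\epsilon$-dependent choice of scale.
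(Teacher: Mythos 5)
You should know at the outset that the paper contains no proof of this lemma: it is quoted (rephrased) from Peng and Wen \cite{PengWen2014}, and the authors only verify that their construction satisfies its hypotheses. So there is no in-paper argument to match yours against; judged on its own, your proof is essentially correct, and it supplies exactly the mechanism such a proof needs. Building $\mu$ from the consistency hypothesis (2), transferring hypothesis (3) to $\mu$ on every level (since $\mu(Q)=\mu_m(Q)$ for level-$m$ squares), chaining within an $O(a)\times O(a)$ block of cells to get the factor $\epsilon^{Ca}$, sandwiching $\sum_{Q'\subseteq R}\mu(Q')\le\mu(R)\le\sum_{Q'\cap R\neq\emptyset}\mu(Q')$, and above all the $\epsilon$-dependent choice of working scale $a\asymp|\log\epsilon|^{-1/2}$ --- which makes $a|\log\epsilon|\to0$, hence $\epsilon^{Ca}\to1$, while $1/a\to0$ --- correctly balances the two competing errors and yields $\mu(Q_1)/\mu(Q_2)\le\bigl(\tfrac{a+2}{a-2}\bigr)^2\epsilon^{2Ca}\to1$ as $\epsilon\to1$. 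Three details deserve to be written out. First, in the weak$*$ identification you need $\int s\,d\mu_k=\int s\,d\mu$ for step functions $s$ constant on level-$j$ squares; grid lines are Lebesgue-null (hence $\mu_k$-null by (1)) but not a priori $\mu$-null, so either work with half-open $N$-adic squares throughout, or note that (3) forces geometric decay of strip masses: within a level-$m$ square, the row of children nearest a grid line carries at most the fraction $\frac{1-\epsilon^{-1}}{1-\epsilon^{-N}}<1$ of the parent's mass, so iterating shows grid lines are $\mu$-null. Second, the doubling definition used in the paper requires $0<\mu(Q_2)$; this also follows from your chaining, since all $N^{2m}$ cells at level $m$ have comparable mass and the total is $1$. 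Third, make explicit that for an arbitrary sidelength $s\le1$ you choose $m$ with $sN^m\in[a_0(\epsilon),Na_0(\epsilon))$, so that the resulting constant $C(\epsilon)$ is uniform over all scales and positions, and note that adjacency in the paper includes corner-touching, which only shortens your chains. With these routine points filled in, your argument is a complete, self-contained proof of a statement the paper merely imports.
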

%

It is easy to see that our construction satisfies the assumptions of Lemma \ref{le:doubling_limit} with $N=4$. 
Indeed, the first two requirements are clearly satisfied. The third one simply follows from the 
main idea of this type of doubling mass distribution: On adjacent 
squares $Q, G \in \D_k$ the weights $w_j(x,y), \; j\leq k$ can only differ on one index $j$, from where it follows that 
$\mu_k(Q) \leq \frac{q}{p} \mu_k(G)$.

\bibliographystyle{plain}
\bibliography{cubes}
\end{document}